\newcommand{\mbb}{\mathbb}
\newcommand{\mc}{\mathcal}
\newcommand{\tn}{\textnormal}
\newcommand{\tsc}{\textsc}
\newcommand{\ttt}{\texttt}
\newcommand{\mbf}{\mathbf}
\newtheorem{remark}{Remark}
\newtheorem{proposition}{Proposition}
\providecommand{\keywords}[1]
{
  \small	
  \textbf{\textit{Keywords---}} #1
}
\begin{document}
\title{\textbf{The Fast and Free Memory Method for the efficient computation of convolution kernels}}
\date{\today}
\author{
    Matthieu~Aussal, CMAP, CNRS, Ecole polytechnique, Institut Polytechnique de Paris, 91128 Palaiseau, France, \texttt{matthieu.aussal@polytechnique.edu}\\\vspace{0.5\baselineskip}
    Marc~Bakry, CMAP, CNRS, Ecole polytechnique, Institut Polytechnique de Paris, 91128 Palaiseau, France, \texttt{marc.bakry@polytechnique.edu}
}

\maketitle

\begin{abstract}
    We introduce the Fast Free Memory method (FFM), a new fast method for the numerical evaluation of convolution products. Inheriting from the Fast Multipole Method, the FFM is a descent-only and kernel-independent algorithm. We give the complete algorithm and the relevant complexity analysis. While dense matrices arise normally in such computations, the linear storage complexity and the quasi-linear computational complexity enable the evaluation of convolution products featuring up to one billion entries. We show how we are able to solve complex scattering problems using Boundary Integral Equations with dozen of millions of unknowns. Our implementation is made freely available within the \tsc{Gypsilab} framework under the GPL 3.0 license.
\end{abstract}

\keywords{convolution product, fast multipole method, hierarchical matrices, boundary integral equations, open-source}

\section{Introduction}\label{sec:introduction}

The numerical computation of convolution products is a crucial issue arising in many domains like the filtering, the computation of boundary integral operators, optimal control, etc. In a continuous framework, a convolution product is of the form

\begin{align}\label{eq:convolution_continuous}
    v(x) = \int_{\Omega}{G(x, y)\, u(y)\,d\Omega_y}
\end{align}
where $\Omega$ is some domain of integration in $\mbb R^d,d\in\mbb N^{\star}$, $u$ some function. The bivariate function $G(.,.)$ is some {\em convolution kernel} of the form 

\begin{align}
    G(x,y) = G(x - y)
\end{align}
where $\|\,.\,\|$ is some distance which can be the classical euclidean distance $|\,.\,|$. Of course, eq. (\ref{eq:convolution_continuous}) does not admit an analytical expression in the general case and the integral is computed numerically using, for instance, a quadrature rule. Assuming we want to evaluate $v$ on a finite set of nodes $X = (x_i)_{i\in[\![1,N_X]\!]}$, we have

\begin{align}\label{eq:convolution_discrete}
    v(x_i) \approx \sum_{j = 1}^{N_Y}{\omega_j\,G(x_i, y_j)\,u(y_j)}
\end{align}
where $(\omega_j)_{j\in[\![1,N_Y]\!]}$ and $Y = (y_j)_{j\in[\![1,N_Y]\!]}$ are respectively the weights and nodes of such a quadrature. The {\em discrete convolution} may be recast as a simple matrix-vector product 

\begin{align}\label{eq:convolution_matrix}
    \mbf v = \mbf G\cdot \mbf W \cdot \mbf u
\end{align}
where $\mbf u = (u_j)_j = (u(y_j))_j$, $\mbf v = (v_i)_i = (v(x_i))_i$, $\mbf G = G(x_i,y_j))_{\{i,j\}}$ and $\mbf W= \tn{diag}((\omega_j)_j)$. Obviously, the matrix $\mbf G$ is {\em dense}. Therefore, the storage and computational cost grow quadratically. The computation of (\ref{eq:convolution_matrix}) is constrained to smaller problems ($N_X, N_Y\approx$ a few thousand) on personal computers and smaller servers, and to $N_X,N_Y\approx 10^6$ for industrial servers.\par\vspace{\baselineskip}
The current approach is to perform the computation approximately up to a given tolerance $\varepsilon$ (accuracy). In the past thirty years, multiple so-called {\em acceleration} methods have been proposed. The entries of $\mbf G$ can be seen as the {\em description of an interaction} between a {\em source} set of nodes $Y$ and a {\em target} set of nodes $X$. Thus all blocks of $\mbf G$ describe the interaction between a {\em source} subset of $Y$ and a {\em target} subset of $X$. Theses interactions may be {\em compressible}, i.e. it admits a {\em low-rank} representation. This is the case, for example, when two subsets are far enough following an {\em admissibility criterion}. The methods mentioned in the following propose different alternatives on the way the interactions are characterized and computed. The standard way is probably the {\em Fast Multipole Method} (FMM) developed by L. Greengard and V. Rokhlin (see \cite{greengardFMM}), initially introduced for the computation of the gravitational potential of a cloud of particles. Later versions feature the support of oscillatory kernels like the Helmholtz Green kernel. One major drawback is that the implementations are mostly kernel-specific despite recent advances in the domain \cite{fong2009}. We refer to \cite{chengFMM} for more details. In 1999, a new approach named {\em Hierarchical matrices} ($\mc H$-matrices) was introduced by S. B\"orm, L. Grasedyck and W. Hackbusch. This method is based on the representation of the matrix by a quadtree whose leaves are low-rank or full-rank submatrices. A strong advantage in favor of hierarchical matrices is that a complete algebra has been created: addition, multiplication, LU-decomposition, etc. Unfortunately, $\mc H$-matrices become less effective for strongly oscillating kernels because the rank of the compressible blocks increases with the frequency of the oscillations. For more details and a complete mathematical analysis, we refer to \cite{hackbusch2015,borm2015}. One of the most recent compression methods, to our knowledge, is the Sparse Cardinal Sine Decomposition (SCSD) proposed by F. Alouges and M. Aussal in 2015 \cite{alouges2015}. It is based on a representation of the Green kernel in the Fourier domain using the integral representation of the cardinal sine function. One major advantage is that the matrix-vector product is performed without partitioning of the space. However, there is no corresponding algebra. All the aforementioned methods aim at having a quasi-linear storage and computational complexity.\par\vspace{\baselineskip}
We introduce here the Fast Free Memory method (FFM) which blends together multiple features of the existing methods in order to have a minimalist storage requirement. It is designed for the computation of massive matrix-vector products, up to billions of nodes, where methods featuring quasi-linear storage complexity fail because of the non-linear part. The FFM relies heavily on the FMM algorithm, in particular on the octree-based space partitioning, and introduces compression techniques featured in the $\mc H$-matrices and the SCSD. In particular, the FFM is a kernel-independent method for the computation of the matrix-vector product (\ref{eq:convolution_discrete}). A powerful feature is that the required storage is minimalist with {\em linear} complexity and that the computational complexity is quasi-linear. This enables the computation of matrix-vector products with hundred of millions of nodes on the {\em source} and {\em target} sets with laboratory-sized servers. This paper is divided into three main parts. In the first part, we develop the FFM algorithm for standard kernels (non-oscillating kernels and the Helmholtz Green kernel). In the second part, we prove the storage and computational complexities. In the last part, we illustrate the proven complexities on academic and industrially-sized problems with millions of entries up to one billion.
\section{The FFM algorithm}

The FFM is a divide-and-conquer method recursively implemented. It is based on two partitioning trees (one for the {\em source} set and one for the {\em target} set) strongly related to the octree used in the classical FFM. This inheritance and the particularities of the FFM are explained in the first part of this section where we also introduce the notations which will be used in this paper. In the second part, we develop the different steps of the algorithm for the case of a general convolution and we optimize it for the case of oscillatory kernels.

\subsection{The FFM and the FMM}

The basic idea in the FMM is that the interaction between subsets of nodes {\em sufficiently far} one from another admit a low-rank representation. The space is therefore partitioned using two octrees (see Figure \ref{img:octree}) obtained by successive refinements of the bounding boxes of the initial {\em source} and {\em target} set of nodes. At each level of refinement, boxes far one from the other (following a given criterion) correspond to {\em compressible} interactions. The other boxes are further subdivided and yield in turn low-rank and non-compressible interactions. When the matrices corresponding to the non-compressible interactions are small-enough, a full computation is performed. For more details, we refer to the bibliography on the topic, for example \cite{greengardFMM, chengFMM, darve1999, darve2000}. In the FMM, the low-rank approximation is performed using a multipole expansion of the convolution kernel, see \cite{greengardHuang}. As this will be illustrated in the following section \ref{subsec:ffmAlgo}, the FFM is more versatile and such expansions will be used only for the oscillating kernels.\par\vspace{\baselineskip}
\begin{figure}[!htbp]
    \centering
    \includegraphics[width=0.8\linewidth]{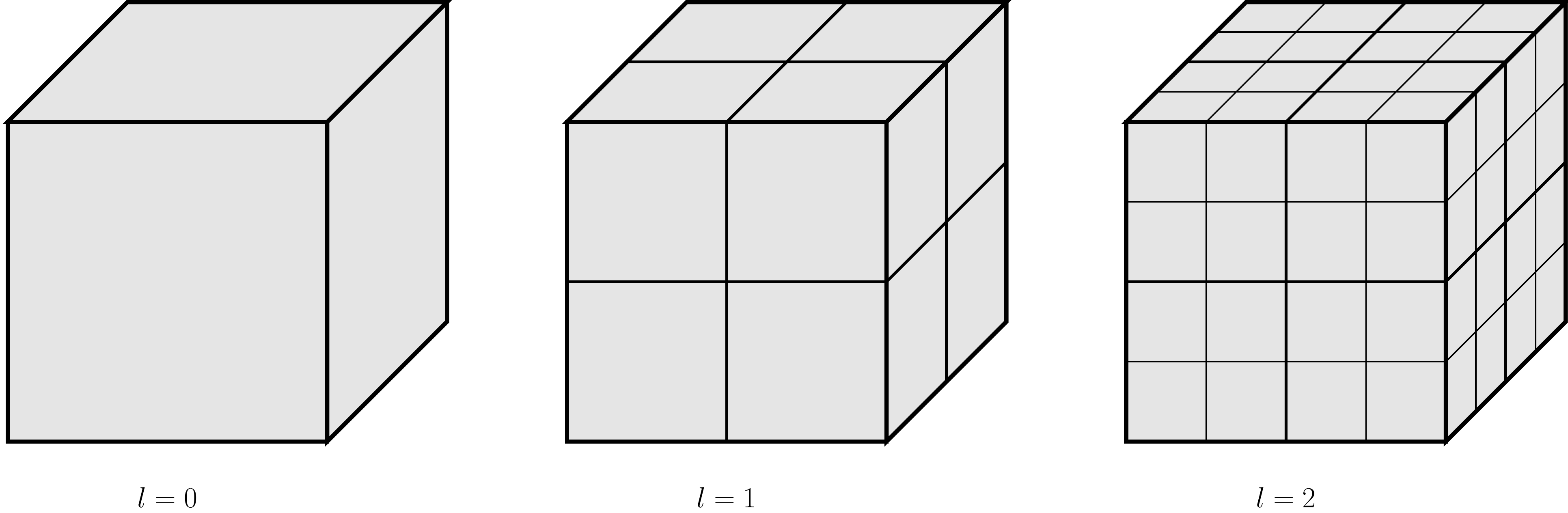}
    \caption{Representation of the octree used in the FFM with three levels of refinement.}
    \label{img:octree}
\end{figure}
The differences between the two methods find their origin in the way the partitioning octrees are built. Classically, the root bounding-box for each set of nodes is {\em tight} in the sense that its edge dimensions fit tightly the expansion of the set of nodes in each direction. Consequently, the initial bounding boxes for the {\em source} and {\em target} sets have different sizes and shapes in general. In the FFM, both initial bounding boxes are cubic with the {\em same} edge length. In particular, it means that for a given level of refinement all the boxes in both octrees are the same up to a translation. The direct consequence of this choice is that there is no need to interpolate data between the boxes of the different trees because the discretization in the quadrature spaces are the same. Another difference is that the FFM is a {\em descent-only} algorithm. This last particularity enables the {\em linear} storage complexity, see section \ref{sec:complexity}.

\subsection{The FFM algorithm}\label{subsec:ffmAlgo}
In this subsection, we describe the kernel-independent compression method for the matrix-vector product. We first describe the initialization. Then, we explain how the kernel-independent matrix-vector product is computed using a well-known Lagrange-interpolation method. We show how the user can choose the compression method when dealing with the particular case of the Helmholtz Green kernel. Finally, a stopping criterion is proposed. In the following, we reuse the previous notations and we introduce $l$ as the depth of the octree. The case $l=0$ corresponds to the root and $l=l^{\tn{max}}$ is the maximum allowed depth.

\subsubsection{Initialization}
The initialization ($l=0$) is performed easily by computing the bounding boxes for $X$ and $Y$. Let $d_{X,\tn{max}}$ and $d_{Y,\tn{max}}$ be the maximum edge dimension of $X$, resp. $Y$, then the initial dimension for {\em both} bounding boxes is 

\begin{align}
    d_0 = \max(d_{X,\tn{max}},d_{Y,\tn{max}}).
\end{align}
The initial bounding box for each set is simply a cube enclosing $X$, resp. $Y$, with edge length $d_0$. At the depth $l$ in the octree, the dimension of the bounding boxes is simply $d_l = d_0/2^l$.

\subsubsection{The kernel-independent FFM }\label{subsubsec:generic_ffm_iteration}
We suppose that the current refinement level is $l\in[\![1,l^{\tn{max}}]\!]$. We consider only one interaction between a box of the {\em source} tree and a box of the {\em target} tree. This interaction is low-rank if the distance between their centers is greater than two-times the edge length of a box. Let $m$ the number of nodes in the {\em target} box and $n$ the number of nodes in the {\em source} box, the compressed product is performed using a classical bivariate Lagrange interpolation, see for example \cite{cambier2017} or the chapters related to the $\mc H^2$-matrices in \cite{hackbusch2015}. The principle of the Lagrange interpolation is to approximate the convolution kernel like

\begin{align}
    G(x,y) \approx \sum_{i=1}^{r_X}{\mc L_i(x)\sum_{j=1}^{r_Y}{G(x_{c,i},y_{c,j})\,\mc L_j(y)}},
\end{align}
where
\begin{itemize}
    \item $\{x_{c,i}\}_i$ and $\{y_{c,j}\}_j$ are the {\em target} and {\em source} {\em control} nodes for the Lagrange polynomials. Following \cite{mastroianni2001, cambier2017}, the best interpolation nodes are the Chebyshev nodes.
    \item $\mc L_i(x)$, resp. $\mc L_j(y)$ are the Lagrange polynomials defined as the tensorization of the one-dimensional Lagrange polynomials in each direction of the space and localized at the {\em control} nodes.
    \item $r_X$ and $r_Y$ are the ranks of the interpolation for the target and source variables. If $r_{X,d}, r_{Y,d}$ are the rank of the interpolation in {\em the direction $d$}, then $r_X = \prod_{d=1}^3{r_{X,d}}$ and $r_Y = \prod_{d=1}^3{r_{Y,d}}$. For a prescribed accuracy $\varepsilon$ on the interpolation, these ranks depend solely on the size of the interpolation domain (in other words: the size of the bounding boxes) and on the regularity of the kernel being interpolated. Consequently, we have 

    \begin{align}
        r_X\equiv r_Y \equiv r \equiv (r_1)^3
    \end{align}
    in the FFM framework.
\end{itemize}
The interpolated matrix-vector product can be therefore recast as

\begin{align}
    \mbf v \approx \mbf L^T_X\cdot\left(\mbf T\cdot\left(\mbf L_Y\cdot \mbf u\right)\right)
\end{align}
where 
\begin{itemize}
    \item $\mbf u$ is the {\em source} vector whose entries are localized at the nodes contained within the {\em source} box.
    \item $\mbf L_Y: r\times n$, resp. $\mbf L_X: r\times m$, is the {\em interpolation} matrix whose entries are the Lagrange polynomials localized at the {\em source} control nodes evaluated at the {\em source} nodes.
    \item $\mbf T:r\times r$ is the {\em transfer}-matrix whose entries are the kernel evaluated for each possible couple of {\em target} and {\em source} control node.
\end{itemize}
However, $r$ is not necessarily low and $\mbf T$ can be further compressed using the Adaptive Cross Approximation, see \cite{bebendorf}, such that 

\begin{align}
    \mbf T\approx \mbf A\cdot\mbf B^T
\end{align}
where $\mbf A: r\times r_{\mbf T}$ and $\mbf B:r\times r_{\mbf T}$ such that $r_{\mbf T}\ll r$. Finally, the Lagrange interpolation consists in four successive matrix-vector products such that

\begin{align}\label{eq:ffmACAmatvec}
    \mbf v \approx \mbf L^T_X\cdot\left(\mbf A\cdot\left(\mbf B^T\cdot\left(\mbf L_Y\cdot \mbf u\right)\right)\right).
\end{align}\par
As the rank $r$ depends on the kernel, it may increase unacceptably when dealing with oscillatory kernels because the polynomial order must be high to fit the oscillations in the sub-domains. In that case, it is beneficial to use kernel-specific compression method as illustrated in the next section \ref{subsubsec:oscillating_ffm_iteration}.

\subsubsection{The FFM for oscillatory kernels}\label{subsubsec:oscillating_ffm_iteration}
In this section, we illustrate the change of compression method for the special case of the Helmholtz Green kernel

\begin{align}\label{eq:helmholtzKernel}
    G(x,y) =\dfrac{1}{4\pi} \dfrac{e^{i k |x-y|}}{|x-y|}
\end{align}
where $k$ is the {\em wavenumber}. A test is performed on the value of $k\cdot d_l$ to determine wether the low-rank interaction is oscillating. For example, the evaluation of the Helmholtz kernel (\ref{eq:helmholtzKernel}) for two nodes $x_i$ and $y_j$ sufficiently close can be detected as non-oscillating because the value of $k\cdot |x_i - y_j|$ is small. In this case, we use the Lagrange interpolation described in subsubsection \ref{subsubsec:generic_ffm_iteration} instead of a specific low-frequency FMM (see \cite{greengard1998} or \cite{darve2004}). In the other case, we approximate the kernel using its Gegenbauer-series expansion like in the FMM.\par 
Let $x_0$, resp. $y_0$, be the center of the {\em target}, resp. {\em source}, box, then

\begin{align}
    x - y = (x-x_0) + (x_0 - y_0) + (y_0 - y)
\end{align}
which can be reformulated like

\begin{align}
    \mbf r = \mbf r_0 + \mbf r_{xy}
\end{align}
where $\mbf r_0 = x_0 - y_0$. Let also $\mbb S^2$ be the unit sphere in $\mbb R^3$, then following for example \cite{darve1999} 

\begin{align}\label{eq:gegenbauerIntegral}
    \dfrac{e^{ik r}}{r} = ik\lim_{L\to\infty}{ \int_{\mbb S^2}{ e^{ik\hat s\cdot\mbf r_{xy}} T_{L,\mbf r_0}(\hat s)\,d\hat s } }
\end{align}
where $r=|x-y|$ and $T_{L,\mbf r_0}(\hat s)$ is the Gegenbauer series such that

\begin{align}\label{eq:gegenbauerSerie}
    T_{L,\mbf r_0}(\hat s) = \sum_{p=1}^L{\dfrac{(2p+1)i^p}{4\pi} h_p^{(1)}(k\, r_0) P_p(\hat s\cdot\hat{\mbf r}_0)},\hspace{5mm}\hat{\mbf r}_0 = \mbf r_0/|\mbf r_0|,\hspace{5mm} r_0 = |\mbf r_0|
\end{align}
where $h_p^{(1)}$ is the spherical Hankel function of the first kind and of order $p$, $P_p$ is the Legendre polynomial of order $p$. In practice, (\ref{eq:gegenbauerSerie}) is truncated at the rank 

\begin{align}\label{eq:gegenbauerRank}
    L = \lfloor |k|\cdot\sqrt 3\cdot d_l -\log(\varepsilon)\rfloor
\end{align}
where $d_l$ is the size of the edge of the bounding box and $\varepsilon$ is the prescribed accuracy on the matrix-vector product, see \cite{darve2000}. The integral (\ref{eq:gegenbauerIntegral}) is computed using a spherical quadrature $\left\{\omega_q,\hat s_q\right\}_{q\in[\![1,n_Q]\!]}$ such that, for two nodes $x_i$ and $y_j$,

\begin{align}\label{eq:gegenbauerQuadrature}
    \dfrac{e^{ik |x_i - y_j|}}{|x_i - y_j|} \approx ik\sum_{q=1}^{n_Q}{e^{ik x_i\cdot\hat s_q}\left(\left(\omega_q T_{L,\mbf r_0}(\hat s_q)e^{ik\mbf r_0\cdot\hat s_q}\right) e^{-ik\hat s_q\cdot y_j}\right)}.
\end{align}
The computation is therefore recast as three successive matrix vector products

\begin{align}
    \mbf v \approx \mbf F_{\hat s\to X}\cdot\left(\mbf T_{L,\mbf r_0}\cdot \left(\mbf F_{Y\to\hat s}\cdot \mbf u\right)\right)
\end{align}
where
\begin{itemize}
    \item $\mbf F_{Y\to\hat s}$ is the {\em dense} matrix representing the discrete non-uniform forward Fourier transform from the {\em source} set to the Fourier domain such that 

    \begin{align}
        \tilde u_q = \sum_{j=1}^{n}{e^{-ik\hat s_q\cdot y_j} u_j}.
    \end{align}
    \item $\mbf T_{L,\mbf r_0}$ is the {\em transfer} {\em diagonal} matrix whose entries contain the value of Gegenbauer series evaluated at $\hat s_q$ such that 
  
    \begin{align}
        \tilde v_q = ik\,\omega_q \left(T_{L,\mbf r_0}(\hat s_q)e^{ik\mbf r_0\cdot\hat s_q}\right)\tilde u_q.
    \end{align}
    \item $\mbf F_{\hat s\to X}$ is the {\em dense} matrix representing the discrete non-uniform backward Fourier transform from the Fourier domain to the {\em target} set such that 

    \begin{align}
        v_i = \sum_{q=1}^{n_Q}{e^{ik x_i\cdot\hat s_q}\tilde v_q}.
    \end{align}
\end{itemize}
Of course, the Fourier-matrices are {\em never} assembled and the Fourier transforms are computed using the corresponding Non-Uniform Fast Fourier Transform (NUFFT) introduced  by A. Dutt and V. Rokhlin \cite{duttNufft1993}, later improved by Greengard \cite{greengardNufft}. We refer to these papers for a complete analysis of the algorithm.
\begin{remark}
    In the classical FMM, the Fast Fourier Transforms are computed at the deepest level in the trees, forcing a back-propagation in the octree. In the FFM, they are performed on-the-fly enabling a descent-only algorithm.
\end{remark}

\subsubsection{Stopping criterion}
The recursive partitioning is stopped whenever one of the following is verified:
\begin{itemize}
    \item any compressible interaction has been computed,
    \item the average number of nodes in the boxes is below some value.
\end{itemize}
The remaining non-compressible interactions, if any, are computed as a dense matrix-vector product.
\section{Complexity analysis}\label{sec:complexity}
In this section, we prove the $\mc O(N)$ storage complexity and the $\mc O(N\cdot\log(N))$ computational complexity of the FFM, where $N=\max(N_X,N_Y)$. To that purpose, we assume that each set $X$ or $Y$ consists in an uniform distribution of nodes in a cube. The case of surface node-distributions is eventually tackled as a particular case.

\subsection{Complexity of an octree}
We give here some general well-known results on space partitioning trees. Assuming $d_0$ is the length of the edge of the root bounding box, the bounding boxes at level $l$ have the dimension $d_l = d_0/2^l$. They contain (in average) $n = N/8^l$ nodes. There are {\em at most} $8^l$ non-empty boxes. Consequently, the maximum depth of an octree is $l^{\tn{max}} = \lfloor \log_8(N)\rfloor$. The construction of the tree itself requires $\mc O(N\cdot\log_8(N))$ operations. In general, the required storage is also $\mc O(N\cdot\log_8(N))$ but it is $\mc O(N)$ in the FFM framework because we store only the data needed at the current depth. For the sake of simplicity, we assume that the boxes contain {\em exactly} $N/8^l$ nodes as it does not modify the overall estimate.

\paragraph{The particular case of surface distributions} We emphasize the fact that using an octree to subdivide evenly distributed nodes on a surface amounts to consider a plane surface partitioned using a quadtree as illustrated on Figure \ref{img:surfaceQuadtree}.
\begin{figure}[!htbp]
    \centering
    \includegraphics[height=60mm]{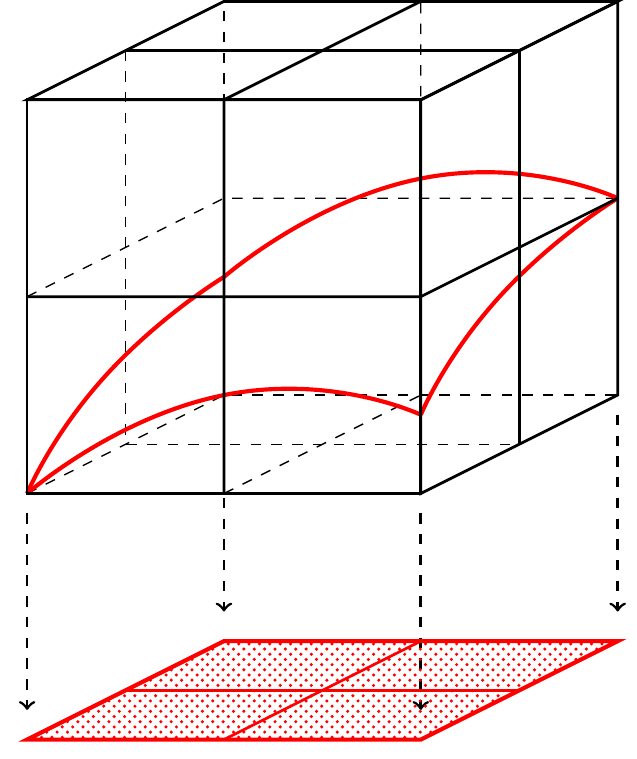}
    \caption{Illustration of the subdivision of a surface using an octree.}
    \label{img:surfaceQuadtree}
\end{figure}
Consequently, we replace the $8$ by $4$ in the aforementioned results: the maximum depth is $\lfloor \log_4(N)\rfloor$, etc.
\begin{remark}
    For the sake of simplicity, the subscript for the $\log$ indicating the type of the logarithm is omitted whenever it is not required for the comprehension.
\end{remark}

\subsection{Complexity estimates for the kernel-independent FFM}\label{subsec:complexityGeneric}

We prove here the storage and computational complexity for the kernel-independent version. Since the FFM is a descent only algorithm, we can discard data stored at the parent level in the tree. This minimal storage requirement leads to the following proposition.
\begin{proposition}\label{thm:complexityGeneric}
    The storage complexity of the FFM is $\mc O(N)$ and the computational complexity is $\mc O(N\cdot\log(N))$.
\end{proposition}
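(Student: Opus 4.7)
The plan is to proceed level-by-level through the octree, bounding the per-level work and storage, and then to use the descent-only property to argue that the total storage reduces to a single-level count while the total work sums over $\lfloor \log_8 N \rfloor$ levels.

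First I would inventory, at a fixed depth $l$, the quantities used by the algorithm: there are at most $8^l$ non-empty boxes per tree, each containing $n_l = N/8^l$ nodes. Because both octrees are built with the same cubic root box, boxes at the same level coincide up to translation, so the admissibility test (centers separated by more than twice the edge length) depends only on relative position. A standard FMM geometric argument then shows that every target box at level $l$ has a uniformly bounded number $C$ of compressible source boxes at that level (neighbors of the parent minus own neighbors), and a uniformly bounded number of non-compressible ones. The rank $r$ of the Lagrange interpolation described in subsubsection \ref{subsubsec:generic_ffm_iteration} depends only on the prescribed tolerance $\varepsilon$ and the regularity of the kernel on well-separated domains, not on $l$ or $N$; likewise the ACA rank $r_{\mathbf{T}}$. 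I would invoke this as a black-box fact from the cited references on $\mathcal{H}^2$-matrices.

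Next I would turn these ingredients into complexity bounds. Each compressible interaction at level $l$ is evaluated through the four matrix-vector products of (\ref{eq:ffmACAmatvec}): the two interpolation steps cost $O(r \cdot n_l)$ each and the two inner ACA steps cost $O(r \cdot r_{\mathbf{T}})$, so the per-interaction cost is $O(n_l)$. Summing $C \cdot 8^l$ such interactions at level $l$ yields $O(8^l \cdot n_l) = O(N)$ per level. Summing over $l = 0, \ldots, \lfloor \log_8 N \rfloor$ gives the $O(N \log N)$ work bound. The non-compressible leaf interactions handled densely at $l^{\mathrm{max}}$ involve $O(N)$ boxes each of constant size, contributing $O(N)$, which is absorbed.

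For the storage bound I would exploit the descent-only feature: at the moment the algorithm processes level $l$, no parent-level Lagrange, transfer or ACA factors need to remain in memory. Hence total memory is the maximum over $l$ of the per-level footprint, which is the $O(N)$ carried by $\mathbf{u}$, $\mathbf{v}$ and the box-node incidence, plus $O(8^l)$ transfer/ACA blocks of constant size and interpolation matrices whose aggregate column count is $N$. Each of these is $O(N)$, so the maximum over levels is $O(N)$, establishing linear storage.

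The main obstacle I anticipate is the rank claim: asserting that $r$ and $r_{\mathbf{T}}$ are bounded independently of $l$ and $N$ requires quantitative error estimates for tensor-Chebyshev Lagrange interpolation of asymptotically smooth kernels on admissible box pairs, and the analogous bound $L = O(k d_l + \log(1/\varepsilon))$ for the Gegenbauer truncation used in subsubsection \ref{subsubsec:oscillating_ffm_iteration}. I would cite these from \cite{hackbusch2015, borm2015} and \cite{darve2000} rather than reprove them, which keeps the proof of the proposition itself a level-counting exercise.
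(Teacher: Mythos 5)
Your proposal is correct and follows essentially the same route as the paper: bounded interpolation/ACA ranks taken as a black box, a uniformly bounded interaction list per box coming from the shared cubic root box, an $\mathcal O(N)$ per-level count summed over $\mathcal O(\log N)$ levels, and the descent-only property turning the storage bound into a single-level maximum. The only difference is bookkeeping --- you charge the interpolation cost to each interaction rather than once per box and do not exploit the fact that only a bounded number of \emph{distinct} transfer matrices occur per level, which costs a constant factor but leaves the asymptotics unchanged.
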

\begin{proof}
    Before we prove the estimates, we make some preliminary remarks. We first emphasize that the interpolation step for each of the {\em source} set, resp. {\em target}, is performed only once for all the corresponding subsets. Second, we drive the attention to the fact that, while there should be many interpolations to compute, most of the transfers are the same. On Figure \ref{img:transferExample}, the two transfers (arrows) represented between the boxes from the {\em source} tree (full lines) and the {\em target} tree (dotted) have the same transfer matrix which is computed only once.
    \begin{figure}[!htbp]
        \centering
        \includegraphics[height=50mm]{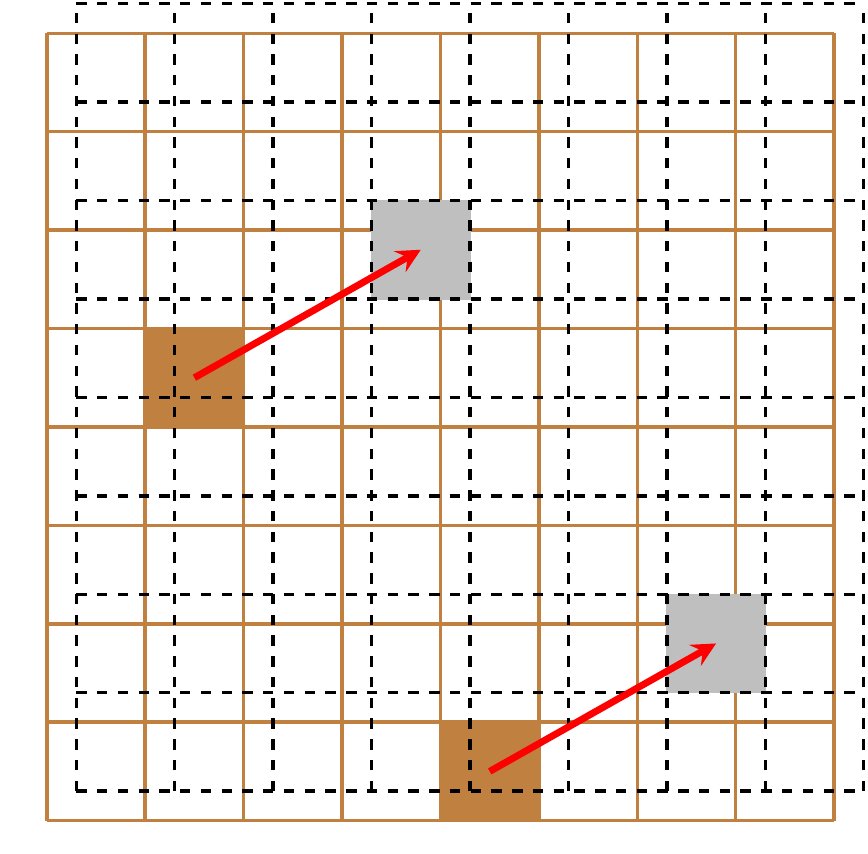}
        \caption{Example of two equivalent transfer steps.}
        \label{img:transferExample}
    \end{figure}
    In fact, the amount of different translations is uniquely determined by the initial configuration of the root bounding boxes as illustrated in 2-dimension on Figure \ref{img:quadtreeConfig}. The left picture corresponds to two overlapping quadtrees. If we enumerate all the possible low-rank interactions for the filled box at the depth 2, we find $27$. The filled bounding box interacts at most with the three outer layers minus the closest layer of boxes meaning there are {\em at most} $7^2 - 3^2 = 40$ possibilities per box at all levels of refinement. All the other boxes are children of boxes involved in low-rank interactions at the previous level of refinement. On the right configuration, the trees are shifted and the amount of possible interactions is now $48$. This number corresponds to the {\em total number of transfer matrices} which needs to be computed at the current depth $l$; it does {\em not} depend on $l$.\par
    \begin{figure}[!htbp]
        \centering
        \includegraphics[height=60mm]{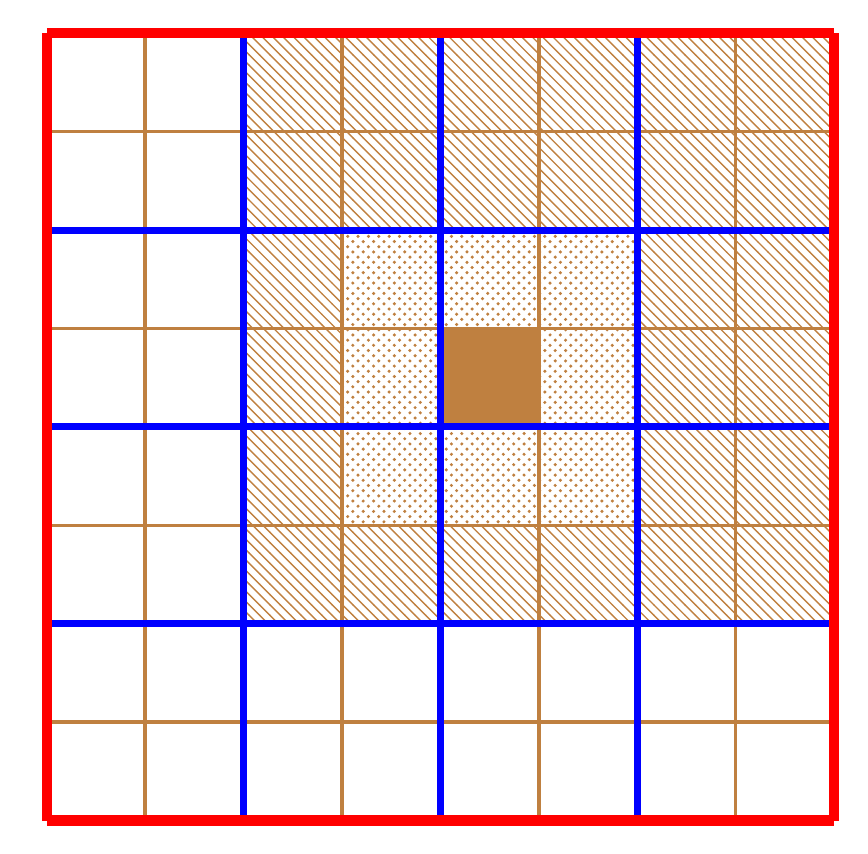}
        \includegraphics[height=60mm]{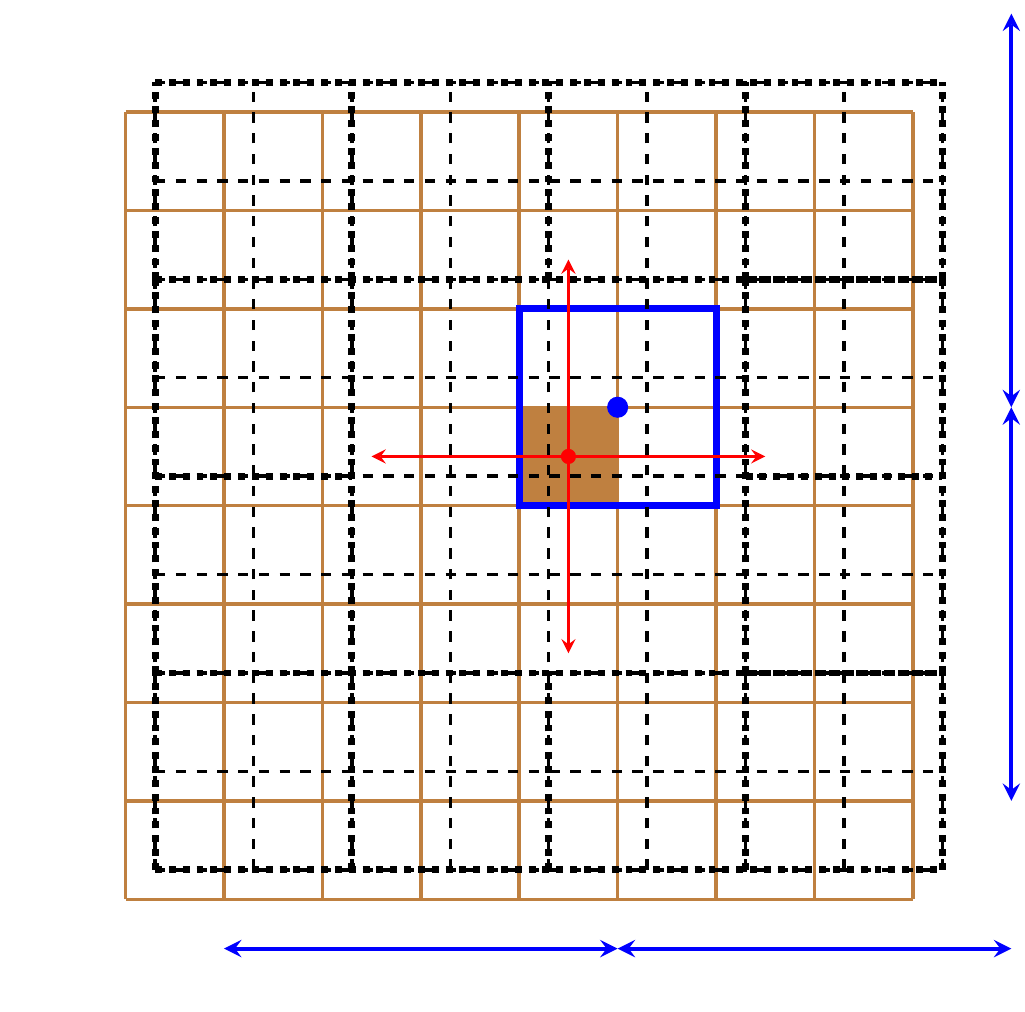}
        \caption{Space configuration of two three-leveled quadtrees in 2-dimension. Left: the quadtrees overlap. Right: the quadtree are shifted. Very thick line: level-0, thick line: level-1, thin line: level-2. Fully colored cell: bounding box for which we want to determine the low-rank interactions.}
        \label{img:quadtreeConfig}
    \end{figure}
    
    \begin{enumerate}
        \item Let $r_1$ the number of control nodes for the Lagrange interpolation in {\em each direction} of space, then $r=r_1^3$ is bounded above by a value independent on $N$. Let $n_Y = N_Y/8^l$ and $n_X = N_X/8^l$, then the storage requirement for the first interpolation is $\mc O(n_Y\cdot r) = \mc O(n_Y)$ per interpolation matrix. Since there are as many matrices as there are boxes in the tree at level $l$, the storage requirement is then $8^l\cdot \mc O(n_Y) = \mc O(N_Y)$. The interpolation consists in $8^l$ matrix-vector products, each of them of size $r\times n_Y$. The computational complexity of the complete Lagrange interpolation of the {\em source} set for any $l$ is therefore $\mc O(N_Y)$.
        \item Let $N_{\mbf T}$ be the maximum number of unique transfers (see again Figure \ref{img:quadtreeConfig}), the storage and the computation of these matrices are each $\mc O(1)$ as the number and the dimensions of the transfer matrices do not depend on $N_Y$ nor $N_X$. Each interpolated {\em source} vector is then multiplied {\em most} $N_{\mbf T}$-times by a transfer matrix whose size is independent on $N_X$ or $N_Y$. Consequently, this transfer step has $\mc O(N)$ complexity for the storage and the computational complexity.
        \item Finally, the last interpolation is performed in two steps:
        \begin{enumerate}
            \item The construction of the interpolation matrices for the {\em target} set has the same complexity as for the {\em source} set, i.e. it is $\mc O(N_X)$.
            \item The transfer to the {\em target} set of interpolations consist in computing the "interpolation matrix"-"transferred vector" for each of the $N_{\mbf T}\cdot 8^l$ "transferred vector". This step should be performed at most $N_{\mbf T}$ times for each of the {\em target} i.e. $N_{\mbf T}\cdot 8^l$-times.
        \end{enumerate}
        Therefore, the second interpolation step has the same complexity as the step 1, i.e. it is $\mc O(N_X)$ in storage and computational complexity.
    \end{enumerate}
    To these costs, we must add the worst cost of the construction of the octrees which is $\mc O(N\cdot\log(N))$ and of the non-compressible interactions which is linear because the maximum number of close interactions for one node is bounded above by the maximum number of nodes allowed in the leaves of the tree.\par
    We conclude that the storage requirement for the FFM is $\mc O(\max(N_X, N_Y))$. Regarding the computational requirement, the worst case consist in performing the interpolation $l^{\tn{max}}$-times. We conclude that the computational cost for the complete FFM product is $\mc O(N\cdot\log(N))$.
\end{proof}

\begin{remark}
    These complexity estimates do not depend on wether the nodes are evenly scattered in a volume (octree-based partitioning) or on a surface (equivalent quadtree-based partitioning).
\end{remark}

\subsection{Complexity estimates for the oscillating kernels}\label{subsec:complexityOscillating}

In the previous subsection \ref{subsec:complexityGeneric}, we proved very general estimates. When dealing with oscillating kernels, one must introduce the notion of wavelength $\lambda$ and of "discretization" per wavelength. Let $\Delta x$ be the average distance between two nodes, it is generally chosen such that 

\begin{align}
k\cdot\Delta x = C, \hspace{10mm} C \approx 1,
\end{align}
where we recall that $k=2\pi/\lambda$ is the wavenumber. In other words, there are approximately six nodes per wavelength. This is fundamentally different from the generic case as the number of nodes in a given volume depends {\em explicitly} on the wavenumber . We assume in the following that $k$ is accordingly adjusted as a function of $N$.\par
We suppose again that the nodes are scattered evenly in a volume. The special case of surfaces is tackled at the end of this subsection.
\begin{proposition}\label{thm:complexityOscillating}
    The storage complexity for the oscillating FFM is $\mc O(N)$ and the computational complexity is $\mc O(N\cdot\log^2(N))$.
\end{proposition}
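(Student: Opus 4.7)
The plan is to adapt the three-step accounting of Proposition~\ref{thm:complexityGeneric}, replacing the Lagrange interpolation by the NUFFT-based Fourier transformation and carefully tracking the new parameters $L$ and $n_Q$ that now depend on $l$. The critical new ingredient is the wavenumber scaling: the sampling hypothesis $k\cdot\Delta x = C$ combined with $\Delta x \sim N^{-1/3}$ for a volume distribution in $\mbb R^3$ yields $k\sim N^{1/3}$. Feeding this into (\ref{eq:gegenbauerRank}) gives $L_l \sim k\, d_0\, 2^{-l} \sim N^{1/3}\, 2^{-l}$ at depth $l$, and any spherical quadrature exact for spherical harmonics of degree $L_l$ uses $n_{Q,l} \sim L_l^2 \sim N^{2/3}\, 4^{-l}$ nodes. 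Since $l^{\tn{max}} = \lfloor\log_8 N\rfloor$, one has $2^{l^{\tn{max}}} \sim N^{1/3}$, so the identity $8^l \cdot n_{Q,l} \sim 2^l\, N^{2/3} \leq N$ holds for every $l\in[\![0,l^{\tn{max}}]\!]$. This single inequality is the technical observation on which the whole proof rests.

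With these estimates in hand I would rerun the three-step accounting of Proposition~\ref{thm:complexityGeneric}. For the source side, the NUFFT on a single box transforms $n = N/8^l$ nodes into $n_{Q,l}$ Fourier samples in $\mc O\bigl((n + n_{Q,l})\log(n + n_{Q,l})\bigr)$ operations; summing over the $8^l$ boxes and using $8^l\cdot n_{Q,l} \lesssim N$ bounds the per-level NUFFT cost by $\mc O(N\log N)$. The combinatorial argument of Figure~\ref{img:quadtreeConfig} which yields at most $N_{\mbf T}$ distinct translations per level is purely geometric and survives unchanged; each transfer is now a diagonal multiplication of length $n_{Q,l}$, so applying all transfers at level $l$ costs $\mc O(N_{\mbf T}\cdot 8^l\cdot n_{Q,l}) = \mc O(N)$, and the corresponding assembly is $\mc O(N_{\mbf T}\cdot n_{Q,l}) = \mc O(N^{2/3})$. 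The backward NUFFT on the target side is symmetric, and the non-compressible interactions are treated exactly as in Proposition~\ref{thm:complexityGeneric}.

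For the storage, I would invoke the descent-only nature of the algorithm: at an active level one keeps only the Fourier-domain buffers of total size $\lesssim N$, the $N_{\mbf T}$ diagonal transfer matrices of size $n_{Q,l}$, and the local node distributions. Nothing accumulates across levels, so the overall storage is $\mc O(N)$. For the computational complexity, summing the per-level cost $\mc O(N\log N)$ over the $\mc O(\log N)$ active levels yields the announced $\mc O(N\log^2 N)$. The surface case follows by substituting $8\to 4$ and $k\sim N^{1/2}$ throughout; the key inequality becomes $4^l\cdot n_{Q,l}\lesssim N$, which again holds up to the quadtree's maximum depth $l^{\tn{max}} = \lfloor\log_4 N\rfloor$, so the final exponents are unchanged.

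The main obstacle I anticipate is justifying with care that the NUFFT cost of Dutt–Rokhlin–Greengard scales as $\mc O\bigl((n + n_{Q,l})\log(\cdot)\bigr)$ uniformly across levels for the precise flavour used here; if the constants or the $\log$-factor actually depend on $n_{Q,l}$ through an oversampling parameter, this could perturb the bookkeeping, though it would not change the final exponents. A secondary subtlety is the inhomogeneity of the kernel across levels: when a box becomes non-oscillating ($k d_l$ small) one must switch back to the Lagrange pathway of Proposition~\ref{thm:complexityGeneric}, whose total cost is already $\mc O(N\log N)$, so the global bound remains intact.
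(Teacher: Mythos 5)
Your proposal is correct and follows essentially the same route as the paper's proof: the same scaling $k\sim N^{1/3}$ (resp. $N^{1/2}$ for surfaces) feeding into the Gegenbauer truncation rank to get $n_Q=\mc O(N^{2/3}/4^l)$, the same key observation $8^l\cdot n_Q=\mc O(N)$, and the same three-step (forward NUFFT / diagonal transfer / backward NUFFT) accounting summed over $\mc O(\log N)$ levels. Your version is if anything slightly more explicit about the $l$-dependence of $n_{Q,l}$ and about where the inequality $8^l n_{Q,l}\lesssim N$ is actually used, but the argument is the same.
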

\begin{proof}
    The proof is very similar to the proof of {\bf Proposition} \ref{thm:complexityGeneric}. We first remark that the NUFFT is based on the Fast Fourier Transform which has a $\mc O(\max(n_Q, n_Y))$ storage requirement and a $O(\max(n_Q, n_Y))\cdot\log(O(\max(n_Q, n_Y)))$ computational complexity. These estimates remain valid for the NUFFT but with a bigger multiplicative constant, see \cite{greengardNufft}. The main difficulty is to estimate the dependency of $n_Q$ to $N$. To that purpose, we assume that for some $l$ we have $n_Q \ll N$. In our choice of spherical quadrature, we have

    \begin{align}
        n_Q = 2\cdot L\cdot (L+1)
    \end{align}
    where $L$ is given by (\ref{eq:gegenbauerRank}). By substituting $d_l=d_0/2^l$ and expanding $n_Q$, we obtain 

    \begin{align}\label{eq:polynomnQk}
        n_Q = A\left(\dfrac{k}{2^l}\right)^2 + B\left(\dfrac{k}{2^l}\right) + C
    \end{align}
    where $A,B,C$ are constants. Following the hypothesis $k\cdot\Delta x\approx 1$, we have that 

    \begin{align}\label{eq:polynomnQN}
        N \approx(\dfrac{d_0}{\Delta x})^3\hspace{5mm}\Leftrightarrow\hspace{5mm} k\sim N^{1/3}.
    \end{align}
    Consequently,

    \begin{align}
        n_Q = A\dfrac{N^{2/3}}{4^l} + B\dfrac{N^{1/3}}{2^l} + C
    \end{align}
    meaning that $n_Q = \mc O(N^{2/3})$. In the particular case of surface distributions of nodes, we have $k\sim N^{1/2}$ and the polynomial (\ref{eq:polynomnQN}) becomes
    
    \begin{align}
        n_Q = A\dfrac{N}{4^l} + B\dfrac{N^{1/2}}{2^l} + C.
    \end{align}
    This means that for surface distributions of nodes, $n_Q = \mc O(N)$. Since the coefficients $A,B,C$ are all positive, we conclude that we always have $n_Q \ll N$.\par 
    
    We detail the complexity of each step below:
    \begin{enumerate}
        \item The NUFFT has the same complexity as the classical FFT. Therefore, the storage requirement for {\em one} NUFFT is $\mc O(n_y)$ meaning that the total storage requirement is $\mc O(N_Y)$. The total number of NUFFTs is $8^l$ and each has the computational complexity $\mc O(n_Y\cdot\log(n_Y))$. Since $n_Q < \max(n_Y, n_X)$, we conclude that the storage complexity of the first step is $\mc O(N)$ and that the computational complexity is $\mc O(N\cdot\log(N))$.
        \item The second step is a simple multiplication in the Fourier domain which is performed $N_{\mbf T}\cdot 8^l$-times on vectors of length $n_Q$, see the proof of {\bf Proposition} \ref{thm:complexityGeneric}. By remarking that $8^l\cdot n_Q = \mc O(N)$, we conclude that this step has a linear storage and computational complexity.
        \item The last step is the backward step of the first one. Consequently, we obtain exactly the same complexities.
    \end{enumerate}
    By including the complexity of the tree and the computation of the non-compressible interactions (see the end of the proof of {\bf Proposition} \ref{thm:complexityGeneric}), we conclude that the worst storage requirement is still $\mc O(N)$ and that the worst computational complexity is $\mc O(N\cdot\log^2(N))$.
\end{proof}

\begin{remark}
In this section, we proved that the storage requirement of the FFM is always linear while the computational complexity is $\mc O(N\cdot\log(N))$ in the general case (but with eventually big multiplicative constants) and $\mc N\cdot\log(N)^2)$ for the particular case of oscillating kernels. These estimates are not worse than the existing compression methods and they are illustrated in the next section.
\end{remark}
% examples
\section{Numerical Examples}

In this section, we give examples of application of the FFM. Our implementation is written in the \tsc{MATLAB} language. First we illustrate the estimates proven in the previous section \ref{sec:complexity}. In a second time, we show how one can use the FFM to solve Boundary Integral Equations iteratively and we solve two publicly available benchmarks in acoustic and electromagnetic scattering.

\subsection{Scalability of the FFM}\label{subsec:numericScalability}
The scalability of the FFM is illustrated by computing the convolution product where the kernel is the Laplace Green kernel given below,

\begin{align}\label{eq:laplaceGreenKernel}
    G(x,y) = \dfrac{1}{4\pi}\dfrac{1}{|x-y|}.
\end{align}
Physically, it amounts to compute the gravitational potential generated by masses located at the {\em source} nodes at the {\em target} nodes. To that purpose, we pick randomly $N$ {\em source} nodes and $N$ {\em target} nodes on the unit sphere $\mbb S^2$ centered at the origin as illustrated on Figure \ref{img:validationSetting}. 
\begin{figure}[!htbp]
    \centering
    \includegraphics[height=80mm]{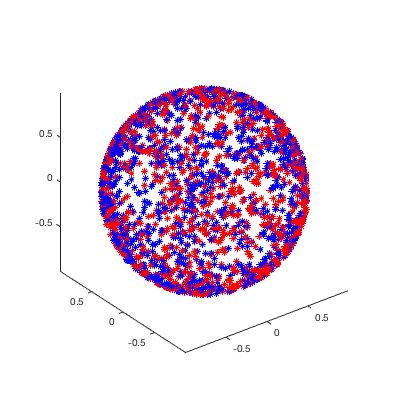}
    \caption{{\em Source} (red) and {\em target} nodes (blue)}
    \label{img:validationSetting}
\end{figure}
Then, we simply compute the convolution product (\ref{eq:convolution_matrix}) with the FFM and we measure the memory requirement and the computation time. The computation\footnote{The convolution product between two sets of nodes may be computed using the \ttt{ffmProduct()} function, available within the open-source Gypsilab framework \cite{gypsiHub} in the \ttt{./openFfm} directory. An example is provided by running the \ttt{nrtFfmBuilder.m} script.} was performed on a computer with 12 cores at 2.9 GHz, 256 GBytes of RAM and Matlab R2017a using single precision. The results are gathered in Table \ref{tab:validationLaplace} for a prescribed accuracy $\varepsilon=10^{-3}$ on the matrix-vector product. The linear storage requirement is confirmed. We observe furthermore that the computational scalability is close to linear. More importantly we are able to achieve a matrix-vector product with {\em one billion of nodes} in each of the {\em source} and {\em target} set in less than four hours. On the required 100 GBytes, approximately 40 GBytes are required for the storage of the coordinates of the nodes, the input vector and the output vector. This implies that the multiplicative constant in the $\mc O(N)$ estimate is almost $2$.\par 
\renewcommand{\arraystretch}{1.5}
\begin{table}[!htbp]
    \centering
    \begin{tabular}{|c|c|c|c|c|}
        \hline
        N & Time 1 core (s) & Time 12 cores (s) & Error & Memory \\
        \hline
        $10^4$ & 2.04 & 9.08 & $8.03\cdot 10^{-5}$ & 1 MB \\
        \hline
        $10^5$ & 9.30 & 17.1 & $1.34\cdot 10^{-4}$ & 10 MB \\
        \hline
        $10^6$ & 87.8 & 33.4 & $1.35\cdot 10^{-4}$ & 100 MB \\
        \hline
        $10^7$ & 1063 & 169 & $1.98\cdot 10^{-4}$ & 1GB \\
        \hline
        $10^8$ & -- & 1499 & $1.81\cdot 10^{-4}$ & 10 GB \\
        \hline
        $10^9$ & -- & 11340 & $3.11\cdot 10^{-4}$ & 100 GB \\
        \hline
    \end{tabular}
    \caption{Summary of the computation time and memory requirement -- Laplace kernel.}
    \label{tab:validationLaplace}
\end{table}
\renewcommand{\arraystretch}{1}
The same experiment is repeated for the Helmholtz Green kernel. The results are given in Table \ref{tab:validationHelmholtz} with the corresponding maximum $k\cdot r$ value.
\renewcommand{\arraystretch}{1.5}
\begin{table}[!htbp]
    \centering
    \begin{tabular}{|c|c|c|c|c|c|}
        \hline
        N & f (Hz) & $k\cdot r_{\tn{max}}$ & Time 1 core (s) & Time 12 cores (s)  & Error \\
        \hline
        $10^4$ & 541 & 20 & 1.65 & 8.81 & $2.98\cdot 10^{-4}$ \\
        \hline
        $10^5$ & 1893 & 70 & 16.2 & 16.3 & $1.69\cdot 10^{-4}$ \\
        \hline
        $10^6$ & 5411 & 200 & 143 & 48.2 & $2.77\cdot 10^{-4}$ \\
        \hline
        $10^7$ & 16234 & 600 & 1557 & 350 & $2.91\cdot 10^{-4}$ \\
        \hline
        $10^8$ & 54113 & 2000 & -- & 8246 & $3.52\cdot 10^{-4}$ \\
        \hline
    \end{tabular}
    \caption{Summary of the computation times -- Helmholtz kernel.}
    \label{tab:validationHelmholtz}
\end{table}
\renewcommand{\arraystretch}{1}

\subsection{Boundary Integral Equations and the FFM}\label{subsec:numericBEM}

Boundary Integral Equations can be obtained from certain equations describing, for example, physical phenomenons like the propagation of an acoustic or electromagnetic wave in a homogeneous medium. We refer to \cite{nedelec} starting p. 110, or \cite{coltonKress} starting p. 66, for more details on how they are obtained. In order to explain how the FFM is used to solve such equations, we consider the scattering of an acoustic wave propagating in an infinite medium by a scatterer with boundary $\Gamma$ on which we apply a Dirichlet boundary condition. This problem can be tackled by solving the following Boundary Integral Equation

\begin{align}\label{eq:singleLayerEquation}
    \int_{\Gamma}{G(x,y)\,\lambda(y)\,d\gamma_y} = -u^{\tn i}(x),\hspace{5mm}x\in\Gamma
\end{align}
where $\lambda$ is some unknown, $G(x,y)$ is the Helmholtz Green kernel (see (\ref{eq:helmholtzKernel})) and $u^{\tn i}$ is the incident wave. This equation may be solved using different method. Here we present shortly the Boundary Element Method based on a Galerkin formulation. We introduce a test function $\lambda^{\star}$ to obtain the Galerkin formulation of eq. (\ref{eq:singleLayerEquation})

\begin{align}\label{eq:singleLayerBEM}
    \int_{\Gamma\times\Gamma}{\lambda^{\star}(x)\,G(x,y)\,\lambda(y)\,d\gamma_y\,d\gamma_x} = -\int_{\Gamma}{u^{\tn i}(x)\,\lambda^{\star}(x)}\hspace{5mm}\tn{for all}\hspace{1mm}\lambda^{\star}.
\end{align}
We further introduce the discrete approximation spaces $(\lambda_j)_{j\in[\![1,N]\!]}$ and $(\lambda^{\star}_i)_{i\in[\![1,N]\!]}$ such that

\begin{align}
    \lambda(y) &= \sum_{j=1}^N{u_j\cdot\lambda_j(y)}, \\
    \lambda^{\star}(y) &= \sum_{i=1}^N{v_i\cdot\lambda^{\star}_i(x)}.
\end{align}
There are multiple ways to deal with this singular integral. Here we integrate the double integral using a Gauss-Legendre quadrature. The resulting inaccurate integration of the singularity is tackled later. Let $\{\omega_{g,k},x_{g,k}\}_{k\in[\![1,n_g]\!]}$ and $\{\omega_{g,k},y_{g,k}\}_{k\in[\![1,n_g]\!]}$ be the weight and nodes of quadrature, the eq. (\ref{eq:singleLayerBEM}) now reads

\begin{align}\label{eq:singleLayerQuad}
    \int_{\Gamma\times\Gamma}{\lambda^{\star}(x)\,G(x,y)\,\lambda(y)\,d\gamma_y\,d\gamma_x} \approx \sum_{i=1}^N{v_i\sum_{k=1}^{n_g}{\lambda_i^{\star}(x_{g,k})\,\omega_{g,k}\,\sum_{l=1}^{n_g}{G(x_{g,k}, y_{g,l})\,\sum_{j=1}^N{\omega_{g,l}\,\lambda_j(y_{g,l})\, u_j}}}}.
\end{align}
Therefore, eq. (\ref{eq:singleLayerEquation}) is rewritten as linear system of equations,

\begin{align}
    \mbf S\cdot \lambda = \mbf U^{\tn i}.
\end{align}
The Galerkin matrix $\mbf S$ can be recast as the product of three matrices such that

\begin{align}
    \mbf S = \left(\Lambda^{\star}\right)^T\cdot\left(\mbf G\cdot\Lambda\right)
\end{align}
where $\mbf L$ is the matrix "transporting" the basis functions to the quadrature nodes, $(\Lambda^{\star})^T$ is the matrix "quadrature-to-test-functions" and $\mbf G$ is the matrix such that $\mbf G_{ij} = G(x_{g,i}, y_{g,j})$. While the matrices $\Lambda$ and $\Lambda^{\star}$ are sparse and can be stored with linear complexity, $\mbf G$ is full. In the process of an iterative inversion algorithm such as GMRES, see \cite{saadSchultz}, one or more matrix-vector products are required,
\begin{enumerate}
    \item $\tilde{\mbf u} = \Lambda\cdot \mbf u$. This product has linear complexity.
    \item $\tilde{\mbf v} = \mbf G\cdot \tilde{\mbf u}$. This product is compressed using the FFM.
    \item $\mbf v = \left(\Lambda^{\star}\right)^T\cdot\tilde{\mbf v}$. This product also has linear complexity.
\end{enumerate}
In general, $n_g$ is closely related to the number of elements in the discretization of $\Gamma$. Assuming for example that there are three quadrature nodes per element, then $n_g = 3\cdot\left(\tn{number of elements}\right)$ meaning that the size of $\mbf G$ may be in fact much higher than the actual size of the linear system. The singular integral is computed independently using a semi-analytical method. It takes the form of an additional sparse matrix which "removes" the singularity integrated numerically in $\mbf G$ and adds the "exact" integration of the kernel.\par
% INTERFACE GYPSILAB
Our FFM library is interfaced with the \tsc{Gypsilab} software. \tsc{Gypsilab} is an open-source (GPL3.0) Finite Element framework entirely written in the \tsc{Matlab} language aiming at assembling easily the matrices related to the variational formulations arising in the Finite Element Method or in the Boundary Element Method. Among other things, it features a complete $\mc H$-matrix algebra (sum, product, LU decomposition, \ldots) compatible with the native matrix types of \tsc{Matlab}. For more details on the capabilities of \tsc{Gypsilab} we refer to \cite{gypsilab,gypsiHub}. In the context of this paper, we use it to manage the computation of the matrices $\Lambda$, $\Lambda^{\star}$ and the right-hand-side $\mbf U^{\tn i}$ \footnote{The high-level interface with the FFM is available as an other overloaded \texttt{integral()} function within Gypsilab. An example is provided by running the \ttt{nrtFfmBuilderFem.m} script.}.\par 
% FIN INTERFACE GYPSILAB
We present here two examples of application. The first one corresponds to the scattering of an underwater acoustic wave by a submarine and the second one is the scattering of an electromagnetic wave by a perfectly electric conductor rocket launcher.

\subsubsection{Acoustic scattering by a submarine}\label{subsubsec:betssi}
We solve the acoustic scattering by a submarine with Neumann Boundary condition. This example is based on the BeTSSi benchmark \cite{betssi}. The mesh is provided by ESI Group and it is remeshed using the open-source Mmg Platform \cite{mmg}. We could solve this problem using the following equation 

\begin{align}\label{eq:hyperSingular}
    \int_{\Gamma}{\dfrac{\partial^2 G}{\partial n_x\partial n_y}(x,y)\,\mu(y)\,d\gamma_y} = \dfrac{\partial u^{\tn i}}{\partial n_x}
\end{align}
whose variational formulation is

\begin{multline}
    k^2\int_{\Gamma\times\Gamma}{\left(\mu^{\star}(x)\cdot G(x,y)\cdot \mu(y)\cdot (n_x\cdot n_y)\right)\,d\gamma_x\,d\gamma_y} - \int_{\Gamma\times\Gamma}{\left(\tn{rot}_{\Gamma}\mu^{\star}(x)\cdot G(x,y)\cdot\tn{rot}_{\Gamma}\mu(y)\right)\,d\gamma_x\,d\gamma_y} = \\
    \int_{\Gamma}{\mu^{\star}(x)\cdot\dfrac{\partial u^{\tn i}}{\partial n_x}(x)\,d\gamma_x}
\end{multline}
where $\mu$ is the unknown, $\mu^{\star}$ is the test function, $k$ the wavenumber, $n_x$ is the outbound normal vector at position $x$, and $\partial/\partial n_x$ is the normal derivative with respect to the variable $x$. Eq. (\ref{eq:hyperSingular}) is very ill-conditioned and is also ill-posed for some frequencies. The integral operator is called the {\em hypersingular} boundary integral operator. The scattering problem could also be solved using another boundary integral equation

\begin{align}\label{eq:doubleLayer}
    -\dfrac{\mu(x)}{2} + \int_{\Gamma}{\dfrac{\partial G}{\partial n_x}(x,y)\,\mu(y)\,d\gamma_y} = -u^{\tn i}(x)
\end{align}
which is better conditioned but which is also ill-posed for some frequencies and less accurate in practice. The boundary integral operator in eq. (\ref{eq:doubleLayer}) is the {\em adjoint of the double layer operator}. To circumvent these issues, we use a linear combination of the integral operators involved in eq. (\ref{eq:hyperSingular}) and (\ref{eq:doubleLayer}) called the {\em Brakhage-Werner} formulation, see \cite{kress}. This formulation is better conditioned and always well-posed. It is important to note that each iteration in the GMRES algorithm requires in fact $6 + 3 = 9$ FFM-products {\em in our implementation}: three for the part with the scalar product of the normal vectors, three for the part with the scalar product of the surface rotational, and three for the adjoint of the double layer operator.\par 
% \tco{red}{LONGUEUR SOUS-MARIN ?} The frequency is set at $10$ kHz meaning that, assuming the velocity of sound in water is $1450$ m.s$^{-1}$, the wavelength is $0.145$ m. The mesh features approximately $12\cdot 10^6$ triangles. Since the numerical integration is performed with $3$ quadrature nodes per triangle, it means that the size of the FFM product is $36\cdot 10^6\times 36\cdot 10^6$. The problem is discretized with $P^1$-elements and for approximately $6\cdot 10^6$ unknowns. It is solved using the preconditioned GMRES algorithm without restart on a 32 cores server at 3.0 GHz with 512 GBytes RAM. The tolerance is set to $\varepsilon=10^{-3}$. Convergence is achieved in \tco{red}{NOMBRE ITERATION} iterations and \tco{red}{NOMBRE HEURES} hours. The radiated field on the surface is represented on Figure \ref{img:betssi}. This computation is also performed using the FFM.
The submarine is $60$ m long and the frequency is set at $6.5$ kHz. Assuming, the celerity of sound in water is $1500$ m.s$^{-1}$, the wavelength is $0.231$ m meaning that there are approximately $260$ wavelengths along the submarine, or differently written we have $k\cdot r_{\tn{max}} \approx 1687$. The mesh features $12.8\cdot 10^6$ triangles. Since the numerical integration is performed using a quadrature rule with 3 nodes per triangle, the size of one FFM-product is $38.4\cdot 10^6 \times 38.4\cdot 10^6$. The problem is discretized with $\mc P^1$-elements implying that the total amount of (nodal) unknowns is $6.4\cdot 10^6$. It is solved using a preconditioned GMRES algorithm without restart on a 32 cores server at 3.0 GHz with 512 GBytes RAM. The tolerance for both the GMRES and the FFM product is set to $\varepsilon = 10^{-3}$. Convergence is achieved in 12 iterations and 50000 seconds ($\approx$ 13 hours and 50 minutes) including the assembling of the preconditioner and the regularization matrix. Each iteration requires approximately $3570$ seconds. The radiated field on the surface is represented on Figure \ref{img:betssi}. This computation is {\em also} performed using the FFM. The memory peak is measured at approximately 200 GBytes when assembling the preconditioner and the regularization matrix.
\begin{figure}[!htbp]
    \centering
    \includegraphics[width=0.8\linewidth]{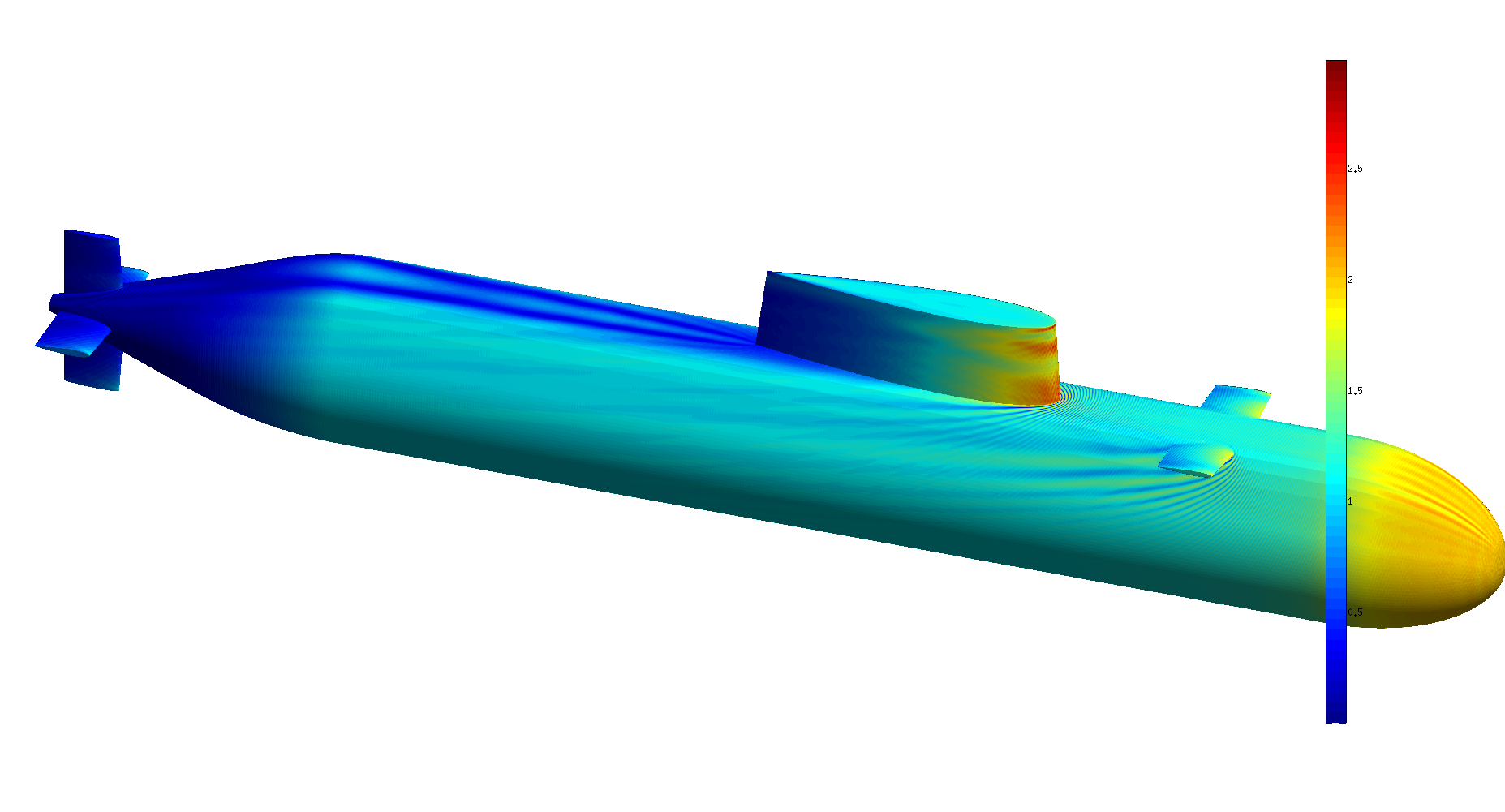}\\
    \includegraphics[width=0.8\linewidth]{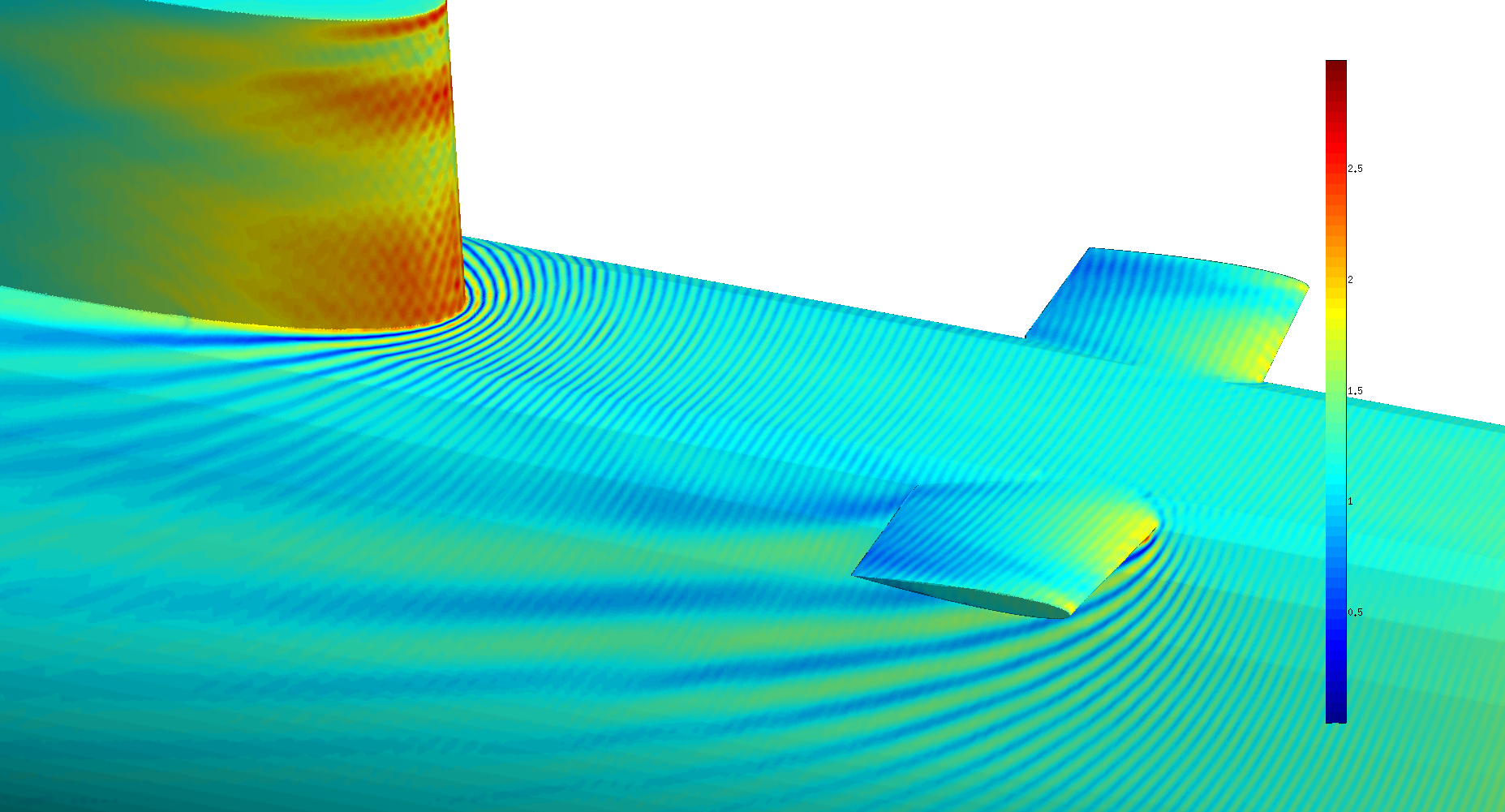}\\
    \caption{Absolute value of $\mu$ on the surface of the submarine}
    \label{img:betssi}
\end{figure}

\subsubsection{Perfectly electric rocket launcher}\label{subsubsec:launcher}
This example is a slightly modified version of a test case extracted from the Workshop EM-ISAE 2018, see \cite{workshop2018}. We study the scattering of an electromagnetic plane wave by a perfectly electric launcher. This problem is solved the Combined Field Integral Equation (CFIE) which is a linear combination of the Electric Field Integral Equation (EFIE) and the Magnetic Field Integral Equation (MFIE). For the construction of these equations, we refer once again to \cite{nedelec} starting p. 234, or \cite{coltonKress} starting p. 108. The EFIE reads

\begin{align}\label{eq:EFIE}
    \dfrac{1}{k^2}\nabla_{\Gamma}\int_{\Gamma}{G(x,y)\, \nabla_{\Gamma}\cdot\mbf J(y)\,d\gamma_y} + \left(\int_{\Gamma}{G(x,y)\mbf J(y)\,d\gamma_y}\right)_T = -\dfrac{(\mbf E^{\tn i})_T}{ikZ}
\end{align}
where $\mbf J$ is the tangential trace of the magnetic field, $\mbf E^{\tn i}$ is the incident electromagnetic wave, $Z$ is an impedance, and $(.)_T$ is the {\em tangential trace} operator. The MFIE reads

\begin{align}\label{eq:MFIE}
    \dfrac{\left(\mbf J\times n_x\right)(x)}{2} + n_x\times\int_{\Gamma}{\nabla_y G(x,y)\times\mbf J(y)\,d\gamma_y} = n_x\times \mbf H^{\tn i}(x)
\end{align}
where $\mbf H^{\tn i}$ is the incident {\em magnetic} field. The CFIE then reads

\begin{align}\label{eq:CFIE}
    \tn{CFIE} = \alpha\cdot\tn{EFIE} + (1-\alpha)\cdot Z\cdot\tn{MFIE},\hspace{10mm}\alpha\in]0,1[.
\end{align}
We use the classical Raviart-Thomas finite element space of order 0 (RT$_0$). The launcher is 60 m long and 12 m in diameter (including the boosters). The electromagnetic wave propagates at $2$ GHz meaning that the wavelength is 0.15 m, assuming a celerity of light equals to $3\cdot 10^8$ m.s$^{-1}$. Therefore, there are $400$ wavelength along the launcher. The total number of unknowns is $60\cdot 10^6$ for approximately $30\cdot 10^6$ triangles. Consequently, the size of a FFM product is $90\cdot 10^6\times 90\cdot10^6$. We use the same server as in subsubsection \ref{subsubsec:betssi}. One GMRES matrix-vector product involves now $10$ ($4$ for the EFIE, $6$ for the MFIE) FFM products and lasts approximately $4$ hours. The real part of the solution is represented on Figure \ref{img:launcher}.
\begin{figure}[!htbp]
    \centering
    \includegraphics[width=0.8\linewidth]{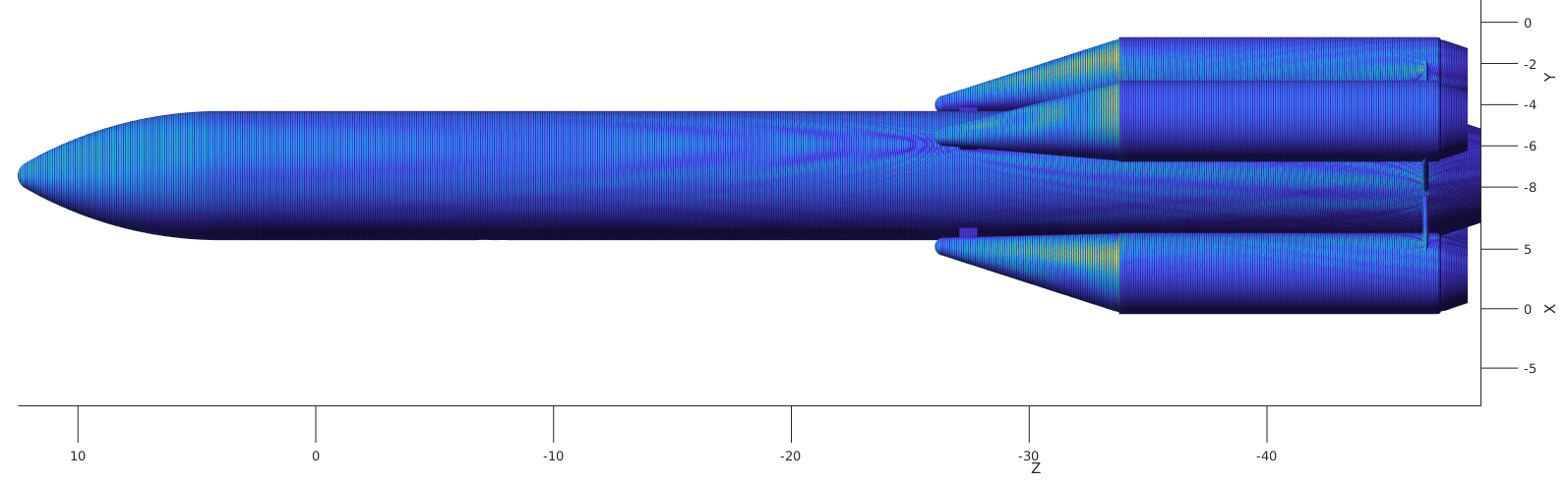}\\
    \includegraphics[width=0.8\linewidth]{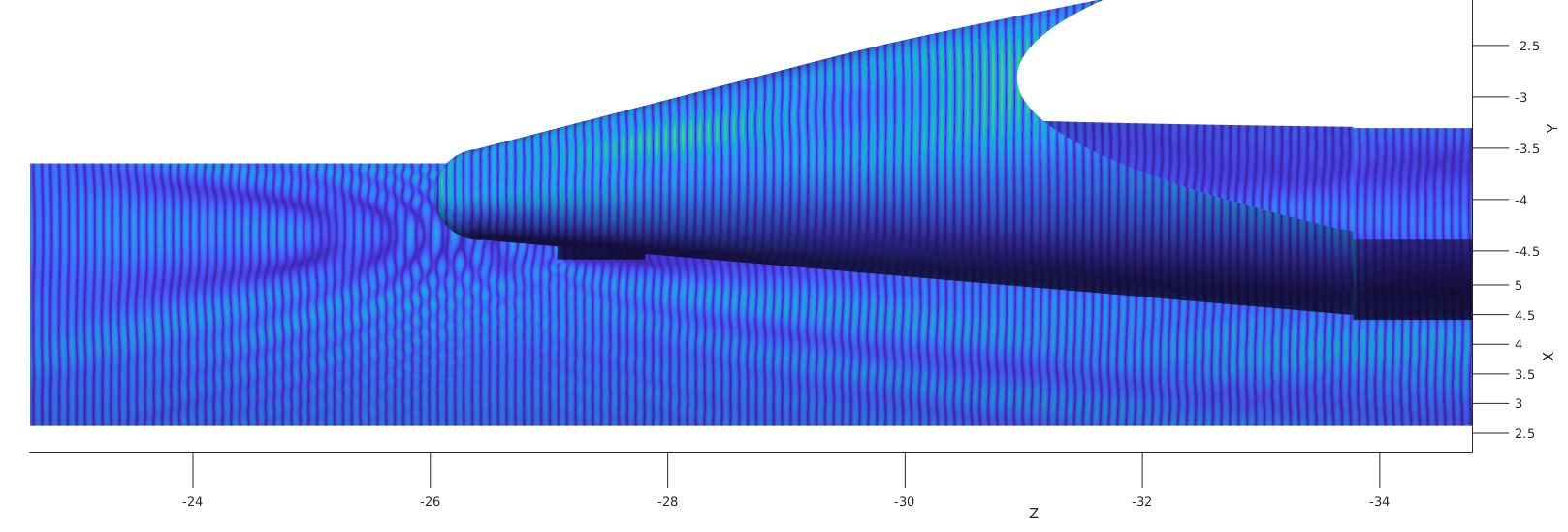}
    \caption{Real part of $\mbf J$ on the whole launcher (top) and detail (bottom).}
    \label{img:launcher}
\end{figure}

\subsection{The FFM compared to the $\mc H$-matrices}
We want to stress that for lower problems sizes ($N\sim 10^6$), the FFM performs, in general, worse than other methods like the $\mc H$-matrices or the FMM. Indeed, everything is computed anew at every call to the method in order to save memory. When compared to the $\mc H$-matrices, the assembling of the $\mc H$-matrix costs more than a few FFM products but the $\mc H$-matrix-vector product is then almost free. This is illustrated in Table \ref{tab:ffmAndHmat} for the Laplace Green kernel. The computation is performed on a laptop and a single core at 2.6 GHz is used. The total available RAM is 16 GBytes. We use the $\mc H$-matrix library featured in Gypsilab. 
% The same observations can be made with the FMM. 
However, the FFM enables the computation of {\em massive} convolution products as illustrated in subsections \ref{subsec:numericScalability} and \ref{subsec:numericBEM}, even for oscillating kernels, on {\em small} servers, thus avoiding the use of huge infrastructures.
\renewcommand{\arraystretch}{1.5}
\begin{table}[!htbp]
    \centering
    \begin{tabular}{|c|c|c|c|}
        \hline
        N & Time FFM product (s) & Time $\mc H$-matrix assembly (s) & Time $\mc H$-matrix product (s) \\
        \hline\hline
        $10^3$ & 0.97 & 0.68 & 0.051 \\
        \hline
        $10^4$ & 2.16 & 5.9 & 0.13 \\
        \hline
        $10^5$ & 13.1 & 54.4 & 1.09 \\
        \hline
        $10^6$ & 103.5 & 733 (swapped)  & 122 (swapped)  \\
        \hline
    \end{tabular}
    \caption{Time comparison between the FFM and $\mc H$-matrices for the Laplace kernel -- Laptop single core @2.6 GHz and 16 GBytes RAM}
    \label{tab:ffmAndHmat}
\end{table}
\renewcommand{\arraystretch}{1}

\section{Conclusion}
We have proposed a powerful and scalable alternative to the existing compression methods when dealing with convolution featuring millions, or eventually billions, of nodes. The two main ingredients of the FFM are: a descent-only tree traversal, and cubic bounding boxes with the same edge size for the two octrees. It enables a linear storage complexity and a quasi-linear computational complexity which are numerically highlighted. The FFM is kernel-independent but its versatility enables optimization for strongly oscillating convolution kernels. Moreover, the implementation effort is small as only a few hundred Matlab lines are required. From a practical point of view, the FFM gives the possibility to compute realistic problems to users who do not have access to a computer cluster. The code is now freely available in the git repository of \tsc{Gypsilab} at \cite{gypsiHub} under the GPL 3.0 license.

\section*{Acknowledgements} 
This work is funded by the {\em Direction G\'en\'erale de l'Armement}. We would like to thank Fran\c cois Alouges for his invaluable help in the development of this work and Leslie Greengard for providing the NUFFT code used for the computation of the numerical results.

\bibliographystyle{plain}
\bibliography{biblio}
\end{document}